\documentclass[8pt]{article}

\usepackage{lineno,hyperref}
\usepackage{amssymb, amsthm, amsmath}
\usepackage{mathtools}
\usepackage{graphics} 
\usepackage[english]{babel}
\theoremstyle{plain}
\newtheorem{corollary}{Corollary}[section]
\newtheorem{theorem}[corollary]{Theorem}
\newtheorem{lemma}[corollary]{Lemma}

\theoremstyle{definition}
\newtheorem{definition}[corollary]{Definition}

\newtheorem*{observation}{Observation}

\textwidth=6.5 in \textheight=8.5 in \oddsidemargin=0.0in
\evensidemargin=0.0in
\setlength\parindent{24pt}
\setlength{\parskip}{0.5em}


\title{The Optimization of Signed Trees}
\author{
  Alvaro Carbonero\footnote{Corresponding Author: ar2carbo@uwaterloo.ca}\\
  \text{University of Waterloo}
  \and
  Janelle Domantay\\
  \text{University of Nevada, Las Vegas}
  \and
  Karen Guthrie\\
  \text{University of Nevada, Las Vegas}
}

\date{\today}

\begin{document}

\maketitle

\begin{abstract}
A signed graph $G$ is a graph where each edge is assigned a + (positive edge) or a - (negative edge). The signed degree of a vertex $v$ in a signed graph, denoted by $sdeg(v)$, is the number of positive edges incident to $v$ subtracted by the number of negative edges incident to $v$. Finally, we say $G$ realizes the set $D$ if:
$$
D = \{sdeg(v) \text{ : } v\in V(G) \}.
$$
The topic of signed degree sets and signed degree sequences has been studied from many directions. In this paper, we study properties needed for signed trees to have a given signed degree set. We start by proving that $D$ is the signed degree set of a tree if and only if $1\in D$ or $-1\in D$. Further, for every valid set $D$, we find the smallest diameter that a tree must have to realize $D$.  Lastly, for valid sets $D$ with nonnegative numbers, we find the smallest order that a tree must have to realize $D$.
\end{abstract}

\textbf{Keywords: } degree sets, signed graphs, trees

\textbf{MSC:} 05C78

\section{Introduction}
Every graph in this paper is finite, simple and undirected.

A signed graph is an ordered pair $(G, s)$ where $G$ is a graph and $s: E(G) \rightarrow \{-, + \}$ is a function that assigns either a positive (+) or a negative (-) sign to every edge. Further, we refer to an edge $e$ as positive or negative if $s(e) = +$ or $s(e) = -$ respectively. The notion of signed graphs was introduced by Harary \cite{harary}, and since then they have been widely studied (the interested reader is referred to Zaslavsky's \cite{encyclopedia}). This paper focuses in particular on signed degrees and signed degree sets. The signed degree of a vertex $v$ in a signed graph, denoted by $sdeg(v)$, is the number of positive edges incident to $v$ subtracted by the number of negative edges incident to $v$. For example, Figure $\ref{fig:ex}$ has a  vertex $v_1$ with signed degree -1. Moreover, the signed degree set of a signed graph $(G, s)$ is the set $D = \{sdeg(v) \text{ : }v\in V(G) \}$; equivalently, we say that $(G, s)$ satisfies $D$. The graph in Figure $\ref{fig:ex}$, for example, has signed degree set $\{1, -1 \}$.

\begin{figure}[!h]
    \centering
    \includegraphics[scale = .6]{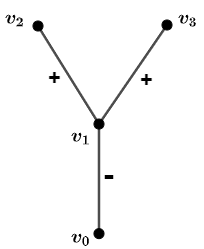}
    \caption{An example of a signed graph}
    \label{fig:ex}
\end{figure}

The topic of signed degree sets stems from the study of signed degree sequences. In \cite{sequence}, Yan \textit{et. al.} found necessary and sufficient conditions needed for an integral sequence to be the signed degree sequence of a signed graph, while noting that a polynomial time algorithm that verifies this condition could be designed. In the same paper, the authors repeat the result but restrict the case to signed trees. In \cite{multipartite}, Pirzada and Naikoo characterized the signed degree sequences of $k-$partite graphs. In the same paper the authors prove that every set of integers is the signed degree set of some $k-$partite graph. Another related subject is studying net regular graphs: those graphs $G$ for which there exists a $\sigma$ such that $(G, \sigma)$ satisfies a set $D$ with $|D|=1$. This concept has been studied \cite{netregular1, netregular2, netregular3} from various perspectives, including its relationship to the spectrum of graphs, regularity degree-wise, and a characterization of net-regular trees. 

In this paper, we investigate, for a given degree set $D$, the minimum diameter that a tree $T$ must have to be the underlying graph of a signed tree $(T,s)$ that satisfies $D$. We also study the same question but in regards to order. We start the paper in Section 2 by determining which sets are valid, i.e. the sets that can be satisfied by a signed tree. In Section 3, we identify the minimum diameter needed when given an arbitrary valid signed degree set. Finally, in Section 4, we repeat this process but for order. Although we have limited results in Section 4, we were able to find the minimum order of signed degree sets that consist of non-negative numbers. In Section 5 we conclude the paper with a conjecture.

Throughout the paper we refer to vertices with degree 1 as pendant vertices. To simplify our notation on signed degree sets, we will always use the variables $x_1, x_2, x_3,...$ and so on to refer to positive integers greater than 1. Similarly, we use the variables $y_1, y_2,y_3, ...$ and so on to refer to negative integers less than $-1$. Finally, we use $z_1, z_2, z_3,...$ and so on to refer to any integer but $1$ and $-1$. 

\section{Valid Sets}
Before we discuss optimization of diameter and order, we must first determine which sets can be considered. An important property of trees is that they have at least two pendant vertices. Since pendant vertices have degree one, they will be incident to either a negative or positive edge, which leads us to the following observation.

\begin{observation}
    If $1\not \in D$ and $-1 \not \in D$, then $D$ cannot be the set of signed degrees of a signed tree.
\end{observation}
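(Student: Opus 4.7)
The plan is to argue by contrapositive: I will assume that $(T,s)$ is a signed tree and show that its signed degree set must contain $1$ or $-1$. The key structural fact, already highlighted in the paragraph preceding the observation, is that every tree on at least two vertices has a pendant vertex (in fact at least two of them). I would briefly justify this with the standard argument that a longest path in $T$ must terminate at degree-one vertices, or by the handshake-lemma observation that a tree on $n \geq 2$ vertices has $n-1$ edges and therefore cannot have all vertices of degree $\geq 2$. The edge case $|V(T)| = 1$ can be dismissed separately, since then $D = \{0\}$ corresponds to the trivial tree, which is typically excluded or handled as a degenerate case.

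Once a pendant vertex $v$ is located, the rest of the argument is essentially a one-liner. Since $v$ has exactly one incident edge $e$, and $s(e) \in \{+,-\}$, the definition of signed degree forces
\[
sdeg(v) = \begin{cases} +1 & \text{if } s(e) = +, \\ -1 & \text{if } s(e) = -.\end{cases}
\]
Hence $sdeg(v) \in \{1,-1\}$, so $D = \{sdeg(u) : u \in V(T)\}$ must intersect $\{1,-1\}$. Contrapositively, if $1 \notin D$ and $-1 \notin D$, then $D$ cannot be the signed degree set of any signed tree.

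There is essentially no obstacle here; the statement is a direct consequence of the definition of signed degree combined with the existence of pendant vertices in trees. The only small care needed is to be explicit about the (trivial) $n = 1$ case if the authors wish to consider it, and to invoke the pendant-vertex fact cleanly rather than leaving it implicit, since this fact is the entire content of the observation.
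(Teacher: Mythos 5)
Your argument is correct and is essentially the same as the paper's: the authors justify the observation by noting that every tree has at least two pendant vertices, and a pendant vertex's single incident edge forces its signed degree to be $1$ or $-1$. Your write-up merely makes the pendant-vertex fact and the trivial one-vertex case explicit, which is fine.
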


\noindent We can take it a step further and consider the following lemma. For it, we will use the notation $CT(n_1, ..., n_m)$, where $n_i>0$, to denote the caterpillar $CT$ where any longest path $P:v_0, ..., v_{m+1}$ has $deg(v_i) = n_i$ for $i\not = 0$ and $i\not = m+1$, and every other vertex in $CT$ has degree 1. We remind the reader that we use $x_i$ to denote positive integers greater than 1, and $y_i$ to denote negative integers less than -1.

\begin{lemma}
    There exists a signed tree $(T, s)$ that realizes $D$ if and only if either $-1\in D$ or $1\in D$.
\end{lemma}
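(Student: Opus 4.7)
The plan is to prove the forward direction using the Observation stated above and to prove the reverse direction by an explicit construction. The forward implication is immediate: every tree has at least two pendant vertices, and the unique incident edge at each pendant contributes $+1$ or $-1$ to the signed degree set, so the set must meet $\{1,-1\}$.

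For the reverse direction, by flipping every edge sign I may assume without loss of generality that $1 \in D$; I then write $D \setminus \{1,-1\} = \{z_1,\ldots,z_k\}$, where by the paper's convention each $z_i \neq \pm 1$. I will start with a backbone path $r_0 r_1 \cdots r_k r_{k+1}$ whose edges are all positive, so that $r_0$ and $r_{k+1}$ are pendants of signed degree $1$. Next, for each interior vertex $r_i$ I will ``load'' it to realize $z_i$: if $z_i \geq 2$, I attach $z_i - 2$ additional positive-edge pendants to $r_i$, so that $r_i$ has degree $z_i$ with every incident edge positive and $sdeg(r_i) = z_i$; if $z_i \leq 0$ (so in particular $z_i \neq -1$), I attach $2 - z_i$ ``buffer sub-trees'' at $r_i$, where each buffer is a new vertex $u$ joined to $r_i$ by a negative edge with two additional positive-edge pendants hanging off $u$. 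Each such $u$ then has degree $3$ with one negative and two positive edges, so $sdeg(u) = 1$, while $r_i$ carries its two positive spine edges against $2 - z_i$ negative buffer edges, giving $sdeg(r_i) = z_i$.

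If $-1 \in D$, I still need a vertex of signed degree $-1$. To arrange this I will change the sign of the spine edge $r_0 r_1$ from positive to negative, so that $r_0$ becomes a pendant of signed degree $-1$, and then compensate by attaching two extra positive-edge pendants to $r_1$, exactly cancelling the $-2$ shift that sign change produces at $r_1$. Every remaining vertex of the resulting tree is then either a positive-edge pendant (signed degree $1$), a buffer (signed degree $1$), or a loaded spine vertex realizing some $z_i$, so a direct count yields that the signed degree set is precisely $D$. The small cases $k = 0$ reduce to $K_2$ for $D = \{1\}$ and to $K_{1,3}$ with one negative and two positive edges for $D = \{1,-1\}$.

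The step I expect to be the main obstacle is the sub-case in which $-1 \notin D$ but some $z_i \leq -2$ lies in $D$: a naive attempt that attaches negative edges directly to pendant leaves would inadvertently force $-1$ into the signed degree set. The buffer sub-tree construction above is precisely designed to route every negative edge through an intermediate vertex of signed degree $1$, ensuring that no pendant sits on a negative edge in this sub-case. Once that design choice is fixed, verifying that each $r_i$ attains its intended signed degree and that no extraneous signed degrees appear is a short bookkeeping calculation.
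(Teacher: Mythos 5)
Your proposal is correct and takes essentially the same route as the paper: the forward direction is the same pendant-vertex observation, and the reverse direction is an explicit caterpillar/spine construction after reducing, by negating all edge signs, to the case $1\in D$. Your ``buffer'' gadget (a negative edge into a vertex carrying two positive pendants) is exactly the configuration the paper produces when it appends two positively signed vertices to each signed-degree $-1$ leaf; the only difference is that you build the $-1$-free tree first and then splice in a vertex of signed degree $-1$ when needed, whereas the paper first realizes $D\cup\{-1\}$ and then patches the $-1$ pendants away.
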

\begin{proof}
Our observation already proves the forward direction. For the other, let $D$ be a set where $-1\in D$ or $1\in D$. We separate the proof into the following cases.

\textbf{Case 1:} $D= \{1, -1, x_1,..., x_n, y_1,..., y_m \}$ and $D'=D\cup \{0 \}$.
    We begin by considering the caterpillar $CT = CT(x_1, ..., x_n, |y_1| + 2, ..., |y_m|)$, where $P = v_0, ..., v_{n+m+1}$ is a longest path. Without loss of generality, let $deg(v_i) = x_i$ for $i\in [n]$, and let $deg(v_{i+n}) = |y_i|+2$ for $i\in [m]$. We will define a function $s: E(CT) \rightarrow \{+, - \}$ such that $(CT, s)$ satisfies $D$. For an edge $e$ in $CT$, define $s$ as follows.
    \[
  s(e) =
  \begin{cases}
                                   + & \text{if $e$ is incident to $v_1, ..., v_n$, or $e$ is of the form $v_iv_{i+1}$,} \\
                                    - & \text{otherwise.}
  \end{cases}
\]
    It is easy to verify then that in $(CT, s)$, for $i\in [n]$, the signed degree of $v_i$ is $x_i$. In a similar way, we can verify that $v_{n+1}, v_{n+2}, ..., v_{m}$ have signed degrees $y_1, y_m$ respectively. In other words, $(CT, s)$ satisfies $D$. The case for $D'$ follows easily.

\textbf{Case 2:} $D= \{1, x_1,...,x_n, y_1,..., y_m \}$ and $D' = D \cup \{0 \}$.
    Let $(T, s)$ be a signed tree that realizes $D\cup \{-1 \}$ like the one described in Case 1, and let $T'$ be that tree where we append two vertices to any pendant vertex of $T$ that has signed degree $-1$. Define $s'$ as a function that extends $s$ to the new edges, assigning $+$ to all of them. It follows that $(T', s')$ realizes $D.$ The case for $D'$ follows easily.

Since proving that there exists a tree that realizes $D$ also proves that there exists a tree that realizes $\{-n \text{ : }n\in D \}$, any case where $-1\in D$ but $1\in D$ can be demonstrated from one of the cases above.
\end{proof}

\noindent As a consequence of this lemma we know which degree sets are of interest for us: the ones with $-1$ or $1$. We thus define these sets as \textbf{valid sets}.  Having established this fact, we are able to delve into optimization.

\section{Minimum Diameter for a Tree}

There are infinitely many signed trees that satisfy a given valid degree set $D$. From these signed trees, we start by studying the ones with optimal diameter.

\begin{definition}
Let $D$ be a valid set. Define its diameter, denoted by $diam(D)$, as the smallest diameter that the underlying tree of a signed tree must have so it satisfies $D$, or equivalently:
$$
diam(D) = \min\{diam(T) \text{ : the signed tree } (T, s) \text{ realizes }D \}.
$$
Further, a signed tree $(T, s)$ is optimal if $diam(T) = diam(D)$.
\end{definition}

\noindent We invite the reader to verify that $diam(\{1, 0 \} ) =3$ and that $diam(\{1, x \}) = 2$ for $x>1$. We will also leave the following without a proof.

\begin{lemma}
For every $k \geq diam(D)$, there exists a signed tree $(T, s)$ that realizes $D$ and whose underlying tree has diameter $k$.
\end{lemma}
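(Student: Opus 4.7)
The plan is to induct on $k$, with the base case $k=\operatorname{diam}(D)$ holding immediately by the definition of $\operatorname{diam}(D)$. For the inductive step, I would show that if $(T,s)$ is a signed tree realizing $D$ with diameter $k$, then one can construct $(T',s')$ realizing $D$ with diameter $k+1$.

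The main construction is a "pendant extension". Fix a diametrical path $v_0 v_1 \cdots v_k$ in $T$; WLOG $\operatorname{sdeg}(v_0)=\epsilon\in\{+1,-1\}$ with $\epsilon\in D$ (since $D$ is valid), so $s(v_0 v_1)=\epsilon$. Delete the pendant $v_0$ and attach at $v_1$ a new signed path $v_1 - a - b$, keeping $s(v_1 a)=\epsilon$ so that $\operatorname{sdeg}(v_1)$ is unchanged. Since we removed a pendant at distance $k$ from $v_k$ and added a new pendant $b$ at distance $k+1$, the new tree has diameter $k+1$. What remains is to choose $s(ab)$ (and, if necessary, attach auxiliary pendants at $a$) so that the only new signed degrees appearing are in $D$.

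Choose $s(ab)$ so that $\operatorname{sdeg}(b)=s(ab)\in\{+1,-1\}\cap D$, which is nonempty because $D$ is valid. Then $\operatorname{sdeg}(a)=\epsilon+s(ab)\in\{0,2\epsilon\}$ from these two edges alone; if that value lies in $D$, we are done. Otherwise I would decorate $a$ with additional pendant neighbors, each carrying an edge of sign in $\{+1,-1\}\cap D$, choosing their number and signs so that $\operatorname{sdeg}(a)$ becomes some target $z\in D$. A brief case analysis on whether $+1\in D$, $-1\in D$, or both, shows that for a suitable $z\in D$ the integer $z-\epsilon-s(ab)$ can be written as $p-n$ with $p,n\geq 0$ and the corresponding signs used only if they lie in $D$.

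The hard part will be the "sparse" case where $D$ contains neither $0$, nor $\pm 2$, nor both of $\pm 1$, so the parity and sign of the auxiliary pendants is severely constrained. Here I would use the existence of the optimal realization $(T_0,s_0)$ to extract a target $z\in D$ of the correct parity: since some vertex of $T_0$ already has signed degree $z$ built out of $\pm 1$ contributions consistent with the allowed pendant signs in $D$, the corresponding decoration at $a$ is feasible. This keeps the induction step a finite check rather than a structural obstruction.
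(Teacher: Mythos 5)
First, note that the paper itself states this lemma without proof, so there is nothing to compare against; judging your argument on its own terms, the skeleton (induct on $k$; replace a diametral pendant $v_0$ by a path $v_1\!-\!a\!-\!b$ of length two, keeping $s(v_1a)$ equal to the old sign so $sdeg(v_1)$ is unchanged; check the diameter goes up by exactly one) is fine, and the step does go through whenever $\{1,-1\}\subseteq D$, or when $D$ contains $1$ together with some element at least $2$ (symmetrically for $-1$). Two caveats even there: you must also make sure the value $sdeg(v_0)$ you deleted is still realized, which you can do by choosing $s(ab)$ so that $sdeg(b)=sdeg(v_0)$; and this should be said explicitly, since realizing $D$ means realizing it exactly.

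The genuine gap is the case you call ``sparse,'' and your sketch for it does not work: the obstruction is not parity. Suppose $-1\notin D$ and $D$ has no element $\geq 2$, e.g.\ $D=\{1,0\}$ or $D=\{1,-3\}$ (both valid, and both central to Section 3). Then every pendant vertex must have signed degree $1$, so every pendant edge is positive; hence $s(v_1a)=+$, $s(ab)=+$, and every auxiliary pendant you attach to $a$ must also be positive, forcing $sdeg(a)=2+p\geq 2\notin D$. You cannot escape by hanging anything deeper than a pendant at $a$, because $a$ is at distance $k$ from the far end $v_k$, so any vertex at depth $2$ below $a$ pushes the diameter to $k+2$; and you cannot give $a$ a negative target $z\in D$ either, since $b$ is a limiting pendant vertex adjacent to $a$ and Lemma \ref{mNeg} forbids negative signed degree there. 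So for these sets the induction step, as designed (grow at the tip of a diametral path and decorate only $a$), provably cannot preserve $D$, and appealing to the optimal realization $(T_0,s_0)$ to ``extract a target $z$ of the correct parity'' does not address this: the constraint is on signs near the new diametral end, not on parity. A correct proof needs a different extension gadget in this regime --- for instance growing the tree at an interior vertex of signed degree $0$ or at a vertex carrying a value $y_i$, or re-deriving the statement from the explicit constructions of Theorems \ref{diamPositive} and \ref{anythingbut-1} --- rather than always lengthening at a diametral pendant. (Separately, the degenerate sets $D=\{1\}$ and $D=\{-1\}$ show the statement needs care at small $k$: no star of diameter $2$ realizes $\{1\}$, so any proof must implicitly exclude or handle such sets.)
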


\noindent We start by considering degree sets that are composed of only positive values. We remind the reader that throughout the paper we assume $x_i > 1$.

\begin{theorem} \label{diamPositive}
If $D = \{1, x_1,..., x_n \}$, then
\[
  diam(D) =
  \begin{cases}
                                   2 & \text{if $n=1$} \\
                                   3 & \text{if $n=2$} \\
                                    4 & \text{if $n>2$}
  \end{cases}
\]
\end{theorem}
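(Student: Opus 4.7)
The plan is to split by the three cases $n=1$, $n=2$, $n>2$, and for each to give a construction realising $D$ at the claimed diameter together with a structural argument that rules out smaller diameters.

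When $n=1$, the star $K_{1,x_1}$ with all positive edges has centre of signed degree $x_1$ and leaves of signed degree $1$, so $diam(D)\le 2$; diameter at most $1$ forces the tree to be $K_1$ or $K_2$, whose signed degree sets are $\{0\}$ and $\{\pm 1\}$, neither of which equals $D$. When $n=2$, I would use a double star: two adjacent vertices $u,v$ joined by a positive edge, with $x_1-1$ and $x_2-1$ positive pendants respectively, which realises $D$ in diameter $3$. For the lower bound, any tree of diameter at most $2$ is a star with a single non-leaf vertex, and since leaves have signed degree $\pm 1$ the signed degree set contains at most one value of absolute value greater than $1$, whereas $D$ contains the two distinct values $x_1,x_2>1$.

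The main work is in the case $n>2$. For the upper bound I would build a depth-$2$ spider rooted at a vertex $c$ with internal children $u_1,\dots,u_{n-1}$, plus possibly some pendants attached directly to $c$; every pendant edge is chosen positive so that all pendants have signed degree $1$. For each $u_i$ I set the sign $s_i\in\{+1,-1\}$ of the edge $cu_i$ and give $u_i$ exactly $x_{i+1}-s_i$ positive pendants so that $sdeg(u_i)=x_{i+1}$; this is either $x_{i+1}-1\geq 1$ or $x_{i+1}+1\geq 3$ pendants depending on $s_i$, so every $u_i$ carries a pendant and the resulting tree has diameter exactly $4$. The signed degree of $c$ is then $L+2k-(n-1)$, where $L$ is the number of direct positive leaves at $c$ and $k$ is the number of positive edges $cu_i$; a short parity case analysis shows I can always choose $L\in\{0,1\}$ and $0\leq k\leq n-1$ so that this expression equals $x_1$.

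For the lower bound in the case $n>2$, it suffices to show that any tree of diameter $3$ has exactly two non-leaf vertices, namely the two middle vertices of a longest path $v_0v_1v_2v_3$. I would prove this by observing that a non-leaf vertex off this path has a neighbour which, attached at the end near $v_0$ or $v_3$, would extend the path beyond length $3$. Since leaves contribute only $\pm 1$ to the signed degree set, a diameter-$3$ tree realises at most two values outside $\{-1,1\}$, but $D$ requires the $n>2$ distinct values $x_1,\dots,x_n$. The main technical obstacle is the bookkeeping in the $n>2$ construction, especially the subcase $x_1<n-1$, where some edges $cu_i$ must be flipped to negative in order to reduce $c$'s signed degree; verifying that the parameters $L$ and $k$ always lie in their permitted ranges is where I would spend most of the detailed write-up.
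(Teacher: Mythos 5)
Your case split, your $n=1$ and $n=2$ arguments, and your lower bounds (counting non-leaf vertices in trees of diameter at most $3$, since every leaf has signed degree $\pm 1$) are all correct and essentially identical to the paper's. The problem is in the $n>2$ upper-bound construction, in the step you yourself flag as the bookkeeping: the claim that you ``can always choose $L\in\{0,1\}$ and $0\leq k\leq n-1$ so that $L+2k-(n-1)=x_1$'' is false whenever $x_1>n$. With $L\leq 1$ and $k\leq n-1$ the centre's signed degree is at most $1+2(n-1)-(n-1)=n$, so for instance $D=\{1,7,2,3\}$ (here $n=3$, $x_1=7$) cannot be realized by your parameters: the centre tops out at signed degree $3$. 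You worried about the subcase $x_1<n-1$, but the genuinely problematic regime is the opposite one, where the value placed at the centre is large; there you must let $L$ grow (take $k=n-1$ and $L=x_1-(n-1)$), reserving the parity argument with $L\in\{0,1\}$ and some negative edges $cu_i$ only for the case $x_1\leq n-1$. With that two-case repair your construction works: the $u_i$ still each carry a pendant, no pendant edge is negative, and the diameter is exactly $4$.

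It is worth noting that the paper sidesteps this entire parity/range analysis by exploiting the freedom you gave away when you fixed $x_1$ at the centre: order $x_1<\dots<x_n$, place the \emph{largest} value $x_n$ at the centre of the star $ST(x_n)$, and attach pendants (all edges positive) to $n-1$ of its leaves to give them signed degrees $x_1,\dots,x_{n-1}$. Since the $x_i$ are distinct integers greater than $1$, one has $x_n>n$, so there are enough leaves to modify, every edge can be positive, and the centre needs no adjustment at all. So your route is salvageable and slightly more flexible (it realizes an arbitrarily chosen element at the centre, at the cost of negative internal edges), but as written the parameter claim fails and must be corrected before the $n>2$ case is complete.
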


\begin{proof}
Throughout this proof, always assume $s$ maps every edge to a $+$. When $n = 1$, the signed graph $(ST(x_1), s)$ realizes $D$ and has diameter 2, and evidently no signed tree with diameter 1 can realize $D$. For $n=2$, attach $x_2 - 1$ vertices to a pendant vertex in $ST(x_1)$. The resulting signed tree will realize $D$ and have diameter 3. Diameter 2 for this set is not possible because trees of diameter $2$ are stars, and stars only have one vertex with degree higher than 1. Similarly, trees with diameter 3 have exactly 2 vertices whose degree is not equal to 1, so no signed tree of diameter 3 can satisfy a set $D$ when $n > 2$. In other words, when $n > 2$, it must be that $diam(D)\geq 4$. 
It remains to show that $diam(D) \leq 4$. Assume that $x_1 < x_2 < ... < x_n$, and notice that since $x_1>1$, it must be that $x_n > n$. Starting with $ST(x_n)$, attach $x_i$ vertices to a pendant vertex of $ST(x_n)$ for each $i\in [n-1]$. Since $x_n > n$, there will be enough pendant vertices of $ST(x_n)$  to do this. Call this resulting tree $T$. Observe that $(T, s)$ satisfies $D$ and has diameter 4, thus proving that $diam(D) \leq 4$.
\end{proof}

\begin{figure}[h]
    \centering
    \includegraphics[scale = 0.6]{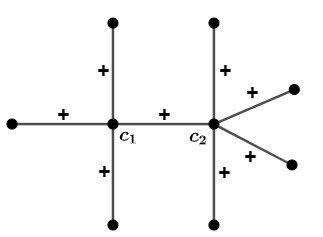}
    \caption{An optimal signed tree by Theorem \ref{diamPositive} given the set D = \{1,4,5\}}
    \label{fig:diam_pos}
\end{figure}

\noindent Building off this theorem, we consider what changes must be made to a tree described in Theorem \ref{diamPositive} (and that can be seen in Figure $\ref{fig:diam_pos}$) in order for it to accommodate a vertex of signed degree $0$. As previously noted, we know that $diam(\{1, 0\}) = 3$, which we now know is higher than the diameter of $\{1, x \}$ for $x>1$. So, when adding additional positive values to the degree set of $\{1, 0\}$, the behavior of optimal diameter closely matches that of the previous theorem. In particular, the diameter of $\{1, 0, x_1, ..., x_{n} \}$ is the same as the diameter of $\{1, x_1, ..., x_{n+1}\}$ when $n \geq 1$. We invite the reader to verify the following.

\begin{corollary}
If $D = \{1, 0, x_1,..., x_n \}$, then
\[
  diam(D) =
  \begin{cases}
                                   3 & \text{if $n=1$} \\
                                   4 & \text{if $n>1$}
  \end{cases}
\]
\end{corollary}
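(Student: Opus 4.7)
The plan is to establish matching upper and lower bounds in each case, leveraging the fact (used repeatedly in this section) that a pendant vertex has signed degree $\pm 1$, so every value in $D \setminus \{1, -1\}$ must be realized by a non-pendant vertex.

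For the lower bounds, the trees of diameter $2$ are exactly stars (one non-pendant vertex) and the trees of diameter $3$ are exactly double-stars (two non-pendant vertices). When $n = 1$ the values $0$ and $x_1$ both lie in $D \setminus \{1, -1\}$, so at least two non-pendant vertices are required, giving $diam(D) \geq 3$. When $n > 1$ the set $\{0, x_1, \ldots, x_n\}$ contains $n + 1 \geq 3$ distinct elements outside $\{1, -1\}$, so at least three non-pendant vertices are required, giving $diam(D) \geq 4$.

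For the upper bounds I would exhibit explicit signed trees. When $n = 1$, take a double-star with centers $u, v$ joined by a negative edge; attach $x_1 + 1$ positive pendants to $u$ and one positive pendant to $v$. Then $sdeg(u) = (x_1 + 1) - 1 = x_1$, $sdeg(v) = -1 + 1 = 0$, and every other vertex has $sdeg = 1$, so $D$ is realized with diameter $3$. When $n > 1$, assume without loss of generality that $x_1 < x_2 < \cdots < x_n$, so $x_n \geq n + 1$. Start with $ST(x_n + 2)$ on center $c$ and designate $n$ of its leaves as $\ell_0, \ell_1, \ldots, \ell_{n-1}$; for each $i \in [n-1]$ attach $x_i - 1$ new pendants to $\ell_i$ and attach one new pendant to $\ell_0$. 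Sign every edge $+$ except the edge $c\ell_0$, which is signed $-$. A direct check gives $sdeg(c) = (x_n + 1) - 1 = x_n$, $sdeg(\ell_i) = x_i$ for $i \in [n-1]$, $sdeg(\ell_0) = -1 + 1 = 0$, and every other vertex has $sdeg = +1$, so $D$ is realized. The longest path joins a pendant of $\ell_0$ with a pendant of some $\ell_i$ (when $n = 2$) or joins pendants of two different $\ell_i, \ell_j$ (when $n \geq 3$), and in both cases has length exactly $4$.

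The only mildly delicate points are (i) flipping $s(c\ell_0)$ to $-$ and adding a positive pendant at $\ell_0$ must not introduce a spurious $-1$ into the signed degree set, which holds because $\ell_0$ ceases to be a pendant once its extra neighbour is attached and no other negative edge in the construction is incident to a pendant; and (ii) $ST(x_n + 2)$ must have enough leaves to absorb the $n$ labelled vertices $\ell_0, \ldots, \ell_{n-1}$, which is immediate from $x_n \geq n + 1$.
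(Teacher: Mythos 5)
Your proof is correct, and it is essentially the argument the paper has in mind: the corollary is left as a reader exercise modeled on Theorem \ref{diamPositive}, with the same lower-bound counting (stars have one non-pendant vertex, diameter-3 trees exactly two, and each element of $D\setminus\{1,-1\}$ needs its own non-pendant vertex) and upper bounds obtained by modifying the star-based trees of that theorem, here by flipping one edge to negative and adding a positive pendant to create the $0$. Your explicit constructions and sign checks are sound, so this correctly fills in the verification the paper requests.
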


\noindent This technique of using simpler degree sets to prove results of more intricate ones will repeat many times throughout the paper. The following observation is important as it allows us to connect degree sets to bigger ones.

\begin{observation} \label{subsetDiam}
If $A \subset B$, $1\in A$ and $-1\not\in B$, then $diam(A) \leq diam(B)$.
\end{observation}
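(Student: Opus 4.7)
The plan is to induct on $|B \setminus A|$. The base case $A = B$ is immediate, so for the inductive step I pick $d \in B \setminus A$ and set $B' = B \setminus \{d\}$. Since $A \subseteq B'$, $1 \in A \subseteq B'$, and $-1 \notin B' \subseteq B$, the inductive hypothesis applied to $(A, B')$ yields $diam(A) \leq diam(B')$, so it suffices to prove $diam(B') \leq diam(B)$.

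For this one-element reduction, I would start from an optimal signed tree $(T_B, s_B)$ realizing $B$ with diameter $k = diam(B)$ and modify it to realize $B'$ without increasing the diameter. Two preliminary facts drive the construction. First, any vertex $v$ with $sdeg(v) = d$ must be internal: pendants of $T_B$ have signed degree $\pm 1$, and since $-1 \notin B$ they all have signed degree $+1$, while $d \neq 1$ because $1 \in A$. Second, because the endpoints of a longest path in a tree are pendants, every internal vertex has eccentricity at most $k - 1$, so attaching any pendant to an internal vertex cannot raise the diameter above $k$. When $B'$ contains some element strictly greater than $d$, the construction is then clean: for each vertex $v$ with $sdeg(v) = d$, I iteratively attach positive pendant edges to $v$ until $sdeg(v)$ equals the smallest element of $B'$ exceeding $d$. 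Each new pendant has signed degree $1 \in B'$ and is attached at an internal vertex, so neither a forbidden signed degree nor a larger diameter is introduced.

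The main obstacle is the extreme case $d = \max B$, where no upward shift at $v$ can land in $B'$. Here my plan is to instead flip the sign of an internal edge $uv$ incident to $v$ from $+$ to $-$, decreasing both $sdeg(v)$ and $sdeg(u)$ by $2$, and then restore $sdeg(u)$ by attaching positive pendants at $u$. Every new pendant still has signed degree $+1$, and every modification happens at an internal vertex, so neither signed degree $-1$ nor a larger diameter ever arises. Verifying that the intermediate values $d - 2, d - 4, \ldots$ can always be arranged to fit inside $B'$, particularly when $v$ happens to have no internal neighbor (a degenerate situation that I expect forces $T_B$ to be a star and can be handled by exhibiting an even smaller-diameter realization of $A$ directly), is the delicate case analysis at the heart of the argument.
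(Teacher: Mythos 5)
You should first note that the paper itself records this statement as an unproved Observation, so your argument has to stand on its own. Your reduction to deleting one element $d$ from $B$, and your handling of the case where $B'$ contains an element larger than $d$, are sound: such a vertex $v$ is internal, internal vertices of a tree have eccentricity at most $diam(T_B)-1$, and attaching positive pendants at internal vertices raises $sdeg(v)$ into $B'$ while only creating the signed degree $1\in B'$. The genuine gap is precisely the case $d=\max B$ that you defer, and as sketched it does not go through. First, flips change $sdeg(v)$ in steps of $2$, so the values $d-2,d-4,\dots$ can miss $B'$ altogether: for $B=\{1,4\}$ and $B'=\{1\}$ the sequence $2,0,-2,\dots$ never lands in $B'$ (this is repairable by also attaching one positive pendant at $v$ to fix parity, but you never combine the two moves). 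Second, and more seriously, your flips need positive edges from $v$ to internal vertices, and their absence does not force $T_B$ to be a star. Take $B=\{1,2,4\}$, so $diam(B)=3$, realized optimally by a double star: $v$ has five pendant neighbours joined by positive edges and one internal neighbour $u$ joined by a negative edge, $u$ has three pendant neighbours joined by positive edges; then $sdeg(v)=4=\max B$, $sdeg(u)=2$, the diameter is $3$, yet $v$ has no positive internal edge, so your toolkit cannot lower $sdeg(v)$ at all (flipping a pendant edge creates signed degree $-1$, and repairing such a pendant requires attaching vertices at distance $2$ from $v$, which pushes the diameter to $4$). Since your proof starts from an arbitrary optimal realization of $B$, this case must be handled, and "it forces a star" is false.

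The missing move is deletion of pendant neighbours of $v$. Because $-1\notin B$, every pendant edge is positive, so deleting a pendant neighbour lowers $sdeg(v)$ by exactly $1$, never increases the diameter, and removes only a vertex of signed degree $1$, which survives elsewhere since the remaining tree still has at least two leaves, all with positive pendant edges. Moreover, the number of pendant neighbours of $v$ plus the number of positive internal edges at $v$ is at least $sdeg(v)=d$, so pendant deletions (worth $-1$ each), flips of positive internal edges (worth $-2$ each, with the affected internal neighbour repaired by two positive pendants), and positive pendant attachments at $v$ (worth $+1$ each, for parity) always suffice to move $sdeg(v)$ onto $1\in B'$ without creating $-1$ and without enlarging the diameter; in the example above, deleting three pendant neighbours of $v$ finishes immediately. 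With this extra operation the one-element induction is not even needed: the same local repairs convert an optimal realization of $B$ directly into a realization of $A$ of no larger diameter.
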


\begin{figure}[h]
\centering
\includegraphics[scale = .8]{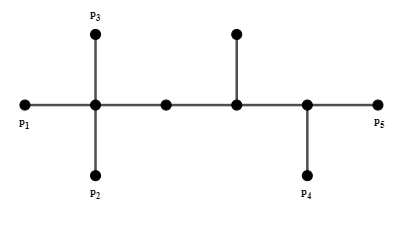}
\caption{A tree with limiting pendant vertices labelled $p_i$, $1 \leq i \leq 5$.}
\label{fig:limPendant}
\end{figure}

\noindent Moving to negative values, we want to prove in general that if $-1\not\in D$ and if $(T, s)$ realizes $D$, then vertices far away from the center of $T$ cannot have negative signed degrees. The following definition helps us formalize this notion. 

\begin{definition}
A vertex $v$ in a tree $T$ is called a limiting pendant vertex if it is a pendant vertex in a longest path of the tree (see Figure \ref{fig:limPendant}).
\end{definition}

\begin{lemma} \label{mNeg}
Let $(T, s)$ be a signed tree that satisfies a set $D$ where $-1\not \in D$. If $v$ is a vertex in $T$ with negative signed degree, then $v$ is not a pendant vertex nor adjacent to a limiting pendant vertex.
\end{lemma}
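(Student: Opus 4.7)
The plan is to handle the two conclusions of the lemma separately: that a vertex $v$ with negative signed degree can be neither a pendant vertex nor a vertex adjacent to a limiting pendant vertex.

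For the first part, if $v$ were pendant then $\deg(v)=1$, so $sdeg(v)\in\{+1,-1\}$. Since $-1\notin D$, we would be forced to have $sdeg(v)=+1$, contradicting $sdeg(v)<0$. This part is essentially immediate.

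For the second part, I would argue by contradiction. Suppose $v$ is non-pendant and adjacent to a limiting pendant vertex $p$. Fix a longest path $P=v_0,v_1,\ldots,v_d$ with $v_0=p$. Because $p$'s only neighbor is $v_1$, necessarily $v=v_1$, and since $v$ is non-pendant we get $d\geq 2$, so $v_2$ exists. The key geometric step is to show that every neighbor $u$ of $v$ other than $v_2$ must be pendant: if such a $u$ had another neighbor $w\neq v$, then $w,u,v_1,v_2,\ldots,v_d$ would be a path in $T$ of length $d+1$, contradicting the maximality of $P$. By the first part applied to each such pendant neighbor $u$, we have $sdeg(u)=+1$, which forces the edge $uv$ to be positive. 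Hence at most one edge at $v$ (the one to $v_2$) is negative, giving
$$sdeg(v)\geq (\deg(v)-1)-1=\deg(v)-2\geq 0,$$
which contradicts $sdeg(v)<0$.

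The main obstacle, although minor, is verifying the path-extension claim rigorously: I need to check that $w,u,v_1,\ldots,v_d$ is actually a path and not merely a walk, i.e.\ that neither $u$ nor $w$ coincides with any $v_i$ for $i\geq 2$. This is automatic from the tree structure, since any such coincidence would produce a cycle in $T$; and $w\neq v_0$ because $v_0$'s only neighbor is $v_1=v\neq u$. Once this is in place, the rest is a direct signed-degree count.
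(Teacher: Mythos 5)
Your proof is correct and follows essentially the same route as the paper's: a contradiction obtained from the maximality of the longest path (extending it through a non-pendant neighbor of $v$ lying off the path) combined with the fact that $-1\notin D$ forces pendant vertices to be incident to positive edges. The only difference is bookkeeping---you show every neighbor of $v$ other than $v_2$ is pendant and count signs to get $sdeg(v)\geq \deg(v)-2\geq 0$, whereas the paper locates a negative edge $vw$ leaving the path and extends through $w$; your version also makes explicit (via the pendant case) that the edge $vp$ is positive, a fact the paper's count of ``at least $|y|+1$ negative edges, at most one on the path'' uses implicitly.
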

\begin{proof}
We proceed by contradiction. Let $v$ be a vertex such that $v$ is adjacent to limiting pendant vertex $p$ while $sdeg(v) = y < 0$. Let $S$ be a longest path in $T$ containing $vp$. To satisfy $y$, the vertex $v$ must be incident to at least $|y| + 1$ negative edges. Note that $S$ may contain at most $1$ of these negative edges, so there exists a vertex $w$ such that $s(vw) = -$ and $w$ is not a vertex in $S$. Since $sdeg(w)\in D$ and $sdeg(w) \not = -1$, there must exist some number of additional edges other than $vw$. This, however, allows us to create a longer path than $S$ by using the path that contains $vw$ rather than $vp$, leading to a contradiction with the maximality of $S$. 
\end{proof}

\noindent The first case of degree sets $D$ with negative values we will consider is the one where $1$ is the only positive value in $D$. A signed tree that satisfies such a degree set must have pendant vertices with positive signed degree. As the following result confirms, this implies that we need a larger diameter. We remind the reader that throughout the paper we assume $y_i < -1$.

\begin{lemma} \label{limitingPendant}
If $(T, s)$ is a signed tree that satisfies $D = \{1, y_1, ..., y_m \}$, then 
\[
  diam(T) \geq
  \begin{cases}
                                   4 & \text{if $m=1$.} \\
                                   5 & \text{if $m=2$.} \\
                                   6 &
                      \text{if $m > 2$}.
  \end{cases}
\]
\end{lemma}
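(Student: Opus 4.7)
The plan is to combine Lemma \ref{mNeg} with a structural analysis of trees of small diameter to rule out each of the diameters below the claimed bounds. Since $-1\notin D$ and the only non-negative element of $D$ is $1$, Lemma \ref{mNeg} forces every pendant vertex and every vertex adjacent to a limiting pendant vertex to have signed degree exactly $1$. Hence the only vertices that can carry a negative signed degree are non-pendant vertices that are not adjacent to any limiting pendant; call these \emph{candidates}. Each case of the lemma then reduces to bounding the number of candidates in trees of small diameter, and comparing to the number $m$ of distinct negative values in $D$.

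First I would handle the case $m=1$: in any tree of diameter at most $3$ (a single edge, a star, or a double star), every non-pendant vertex is adjacent to a pendant, and every pendant is limiting because it lies on some longest path. Thus there are zero candidates, forcing $diam(T)\geq 4$ as soon as one negative signed degree is present. Next, for $m=2$ I would suppose $diam(T)=4$ and exploit the fact that $T$ then has a unique center $c$ with all other vertices at distance $1$ or $2$ from $c$. A short case analysis shows that every neighbor of $c$ which is not a pendant has children at distance $2$ that must themselves be pendants (any grandchildren would push the diameter past $4$), and those distance-$2$ pendants are limiting because pairing two of them across $c$ realizes the diameter. Hence the only candidate is $c$ itself, so at most one negative signed degree can appear, contradicting $m=2$. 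Finally, for $m>2$ I would suppose $diam(T)=5$, use the central edge $c_1c_2$ to split $T$ into two halves of depth at most $2$, and by the same style of argument show that $c_1$ and $c_2$ are the only candidates, contradicting the requirement of three distinct negative signed degrees.

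The main obstacle will be the structural bookkeeping in the diameter-$4$ and diameter-$5$ cases: explicitly verifying that every pendant at maximum distance from the center is limiting (so its neighbor is disqualified as a candidate), while every pendant attached directly to the center at distance $1$, though non-limiting, still contributes nothing because it is already a pendant, and also that no additional candidates can hide among the ``middle layer'' of the tree. Once these structural facts are pinned down for each small diameter, each lower bound on $diam(T)$ follows from a pigeonhole count: the number of required negative signed degrees exceeds the number of available candidate vertices.
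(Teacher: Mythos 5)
Your proposal is correct and follows essentially the same route as the paper: apply Lemma \ref{mNeg} and observe that in trees of diameter at most $3$, $4$, and $5$ the vertices that are neither pendant nor adjacent to a limiting pendant vertex number $0$, $1$, and $2$ respectively, which caps the number of distinct negative signed degrees. The paper states this argument only for the $m=1$ case and dismisses $m=2$ and $m>2$ with ``similar logic,'' so your candidate-counting write-up is simply a more explicit version of the intended proof.
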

\begin{proof}
In a tree of diameter $3$, there exists no vertex that is neither a pendant vertex, nor adjacent to a limiting pendant vertex. By Lemma \ref{mNeg}, a signed tree of diameter $3$ that satisfies $D$ cannot  have a vertex of negative signed degree, so if $m = 1$, then $diam(T, s) \geq 4$. A similar logic can be used to prove the cases where $m = 2$ and $m > 2$.
\end{proof}

\noindent Our penultimate case is the one where $D$ may contain any integer with the exception of $-1$. As the proof of Theorem \ref{anythingbut-1} demonstrates, the fact that negatives require such a big diameter allows us to add positive numbers to the signed degree set without changing this parameter.

\begin{figure}[h]
\centering
\includegraphics[scale = .4]{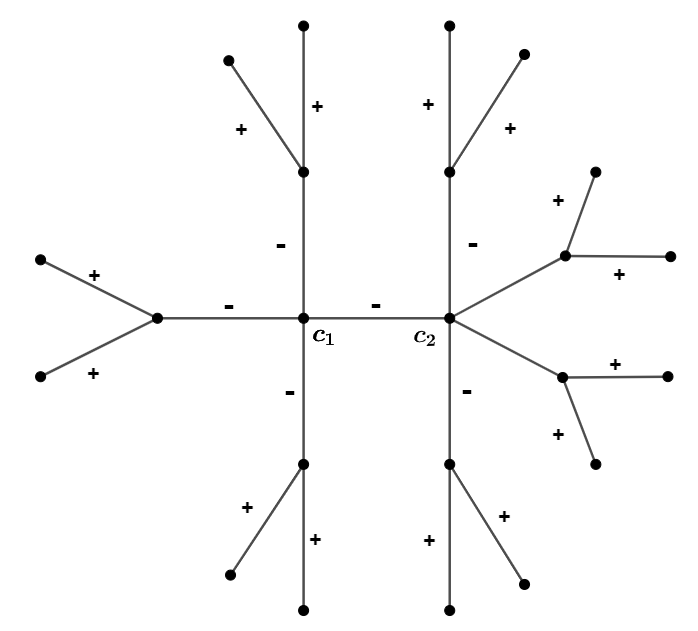}
\caption{An optimal signed tree that satisfies the degree set $D = \{1, -4, -5\}$ constructed from the tree in Figure 2.}
\label{fig:posNegTree}
\end{figure}

\begin{theorem} \label{anythingbut-1}

If $D= \{1, y_1,..., y_m, x_1,..., x_n\}$ where $m \geq 1$ and $n\geq 0$, then $S(D) = S(D\cup \{0 \})$ and
\[
  diam(D) =
  \begin{cases}
                                    4 & \text{if $m=1$} \\
                                    5 & \text{if $m=2$} \\
                                    6 & \text{if $m>2$.}
  \end{cases}
\]
\end{theorem}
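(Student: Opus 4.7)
The plan is two-pronged: derive the diameter lower bound from Lemma~\ref{limitingPendant} and Observation~\ref{subsetDiam}, and then produce, for each value of $m$, an explicit signed tree whose diameter meets that bound and which also accommodates the optional value~$0$. For the lower bound, since $\{1,y_1,\ldots,y_m\}\subseteq D\subseteq D\cup\{0\}$ and $-1\notin D\cup\{0\}$, Observation~\ref{subsetDiam} gives $diam(\{1,y_1,\ldots,y_m\})\le diam(D)\le diam(D\cup\{0\})$, and Lemma~\ref{limitingPendant} then yields $diam(D)\ge 4,\,5,\,6$ in the three cases. The work is to exhibit matching constructions.

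The main construction primitive I would use is what I will call a \textbf{negative gadget}: an internal vertex $w$ joined to a chosen central vertex by a negative edge and carrying two positive pendant leaves, so that $sdeg(w)=-1+2=1$. This lets us load arbitrarily many negative edges onto a central vertex while keeping every newly created signed degree equal to~$1$. For $m=1$, take a single center $c$; attach one positive pendant leaf, $n$ internal vertices $u_j$ via positive edges each carrying $x_j-1$ further positive leaves (so $sdeg(u_j)=x_j$), and $|y_1|+n+1$ negative gadgets. A direct count gives $sdeg(c)=(n+1)-(|y_1|+n+1)=y_1$, and every path has length at most $4$. For $m=2$, use a central positive edge $c_1c_2$, attach to each $c_i$ enough negative gadgets to force $sdeg(c_i)=y_i$, and hang the $u_j$-structures off $c_1$ (with one extra negative gadget per $u_j$ to keep $c_1$'s balance); the longest path has length $5$. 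For $m>2$, take a hub $c_0$ joined by positive edges to $v_1,\ldots,v_m$, turn each $v_i$ into a ``mini-center'' of signed degree $y_i$ by attaching $|y_i|+1$ negative gadgets to it, attach the $u_j$-structures directly to $c_0$, and add enough negative gadgets at $c_0$ so that $sdeg(c_0)=1$. The only length-$6$ paths then have the shape leaf--$w$--$v_i$--$c_0$--$v_j$--$w'$--leaf, giving diameter~$6$.

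To handle $D\cup\{0\}$, in every construction above I would append a single extra vertex $z$ joined by a negative edge to the (or any) central vertex, with one positive pendant leaf hanging off~$z$; then $sdeg(z)=0$, and one additional positive leaf at the central vertex restores its signed degree. Since $z$ and its leaf sit at depth at most~$2$ from the central structure, the diameter is unchanged. The main obstacle is bookkeeping: one must verify that none of the auxiliary gadgets (the $w$'s, $u_j$'s, the balancing gadgets at $c_0$, and the $0$-gadget $z$) produce a path longer than the claimed diameter, and that every vertex's signed degree lies in~$D$. The depth-$2$ design of the gadgets handles the first point, and the count of positive versus negative edges at each central vertex handles the second.
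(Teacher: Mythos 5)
Your proposal is correct and takes essentially the same route as the paper: the lower bound comes from Lemma \ref{limitingPendant} (itself resting on Lemma \ref{mNeg}), and the upper bound from an explicit construction in which the vertices of negative signed degree sit at or next to the center and every negative edge ends at a vertex carrying two positive pendant leaves --- your \emph{negative gadget} is exactly the paper's device of appending two positive leaves to each pendant of an all-negative star (respectively caterpillar, double star), and your grafting of the $x_j$-branches and the $0$-vertex at depth at most two from the center mirrors the paper's diameter-preserving modifications. If anything, you are slightly more explicit than the paper, since you verify the $D\cup\{0\}$ claim directly and route the lower bound through Observation \ref{subsetDiam} so that it applies when positive values are also present in $D$.
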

\begin{proof}
Consider the case when $m = 1$. Let $D' = \{-1, y_1\}$. Let $(T, s)$ be the signed tree where $T = CT( |y_1|)$ and where $s$ assigns a $-$ to every edge. Note that $(T,s)$ satisfies $D'$. Let $c$ refer to the vertex in $T$ such that $deg(c) = |y_1|$. 
Let $P = \{p_1,..., p_{|y_1|}\}$ be the set of pendant vertices in $T$.
For each $i\in \{1, ..., |y_1| \}$, attach two vertices to vertex $p_i$, and extend $s$ so it assigns $+$ to these new edges. This will change every vertex with signed degree of $-1$ to signed degree of 1, thus making $(T, s)$ now realize $\{1, y_1\}$. For an example of this construction, but applied to the case when $m = 2$, see Figure \ref{fig:posNegTree}. Note that this process will increase the diameter of $T$ to 4. Finally, to make the tree satisfy $D$, we will use a procedure that we first illustrate with $x_1$. Attach two new vertices $u_1$ and $u_2$ to $c$ such that $s(u_1c) = -$ and $s(u_2c) = +$. This way the signed degree of $c$ remains unchanged. Attach $x_1+1$ vertices to $u_1$, and let $s$ assign $+$ to these new edges. This makes the signed degree of $u_1$ be $x_1$. In other words, $(T, s)$ now satisfies $\{1, x_1, y_1 \}$. Observe that the diameter of $T$ will not increase using this procedure. Continuing in this manner for every $x_i \in D$, we see that we can modify $(T, s)$ so that it satisfies $D$ while still having diameter 4. This proves that $diam(D) \leq 4$, and by Lemma \ref{limitingPendant}, we have the result for $m = 1$.

\noindent We can use the same technique when $m = 2$. We start with a signed tree $(T, s)$ as described in Theorem \ref{diamPositive} that satisfies $D' = \{-1, y_1, y_2 \}$, and we modify it so it satisfies $\{1, y_1, y_2 \}$ with diameter 5. Further, since the diameter is large, we can modify the tree so it has a vertex with signed degree $x_i$ while maintaining the diameter. This gives the desired upper bound while Lemma \ref{limitingPendant} gives the desired lower bound. The same technique also works when $m > 2$.
\end{proof}

\begin{figure}[h]
\centering
\includegraphics[scale = .5]{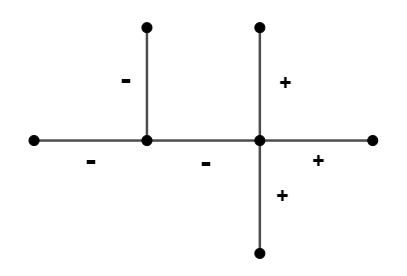}
\includegraphics[scale = .5]{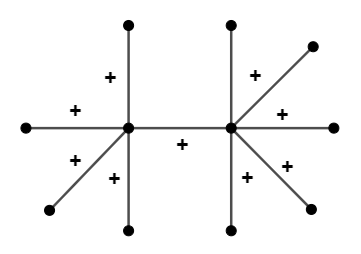}

\caption{The signed graph on the left satisfying $D = \{1, -1, -2, 2 \}$, and the signed graph on the right satisfying $D = \{1, 5, 6 \}$}.
\label{fig:posNegTree2}
\end{figure}

\noindent Our final and most general case for the optimization of diameter is signed degree sets that contain 1 and $-1$ along with $z_1, ..., z_n$, where the numbers $z_i$ are meant to denote distinct integer not equal to 1 or -1. The crux of this proof lies in the fact that the result matches that of Theorem \ref{diamPositive}.
In short, these results are similar because it is possible to build an optimal tree that satisfies a degree set $\{1, -1, z_1, ..., z_n\}$ by modifying an optimal tree that satisfies $\{1, x_1, ..., x_n\}$.

\begin{theorem}\label{generalCase}
If $D = \{1, -1, z_1,..., z_n \}$, then
\[
  diam(D) =
  \begin{cases}
                                   2 & \text{if $n=1$} \\
                                   3 & \text{if $n=2$} \\
                                    4 & \text{if $n>2$.}
  \end{cases}
\]
\end{theorem}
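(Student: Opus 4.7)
The plan is to prove the bounds separately: the lower bounds by counting distinct signed degrees achievable in trees of small diameter, and the upper bounds by explicit constructions that mirror those in Theorem \ref{diamPositive}, as the authors' remark already hints.

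For the lower bound, I will use the standard classification of trees of small diameter. A tree of diameter 1 is $K_2$ and has $|D|=1$; a tree of diameter 2 is a star, in which every pendant has signed degree $\pm 1$ and only the center contributes another value, so $|D|\le 3$; a tree of diameter 3 is a double broom on two internal vertices (all other vertices being pendants of one of them), so $|D|\le 4$. Because the set in the statement has size $n+2$, these bounds immediately give $diam(D)\ge 2,3,4$ when $n=1,2,>2$ respectively.

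For the upper bound I build an explicit signed tree in each case. When $n=1$, I take the star $K_{1,k}$ with $a\ge 1$ positive and $b\ge 1$ negative edges satisfying $a-b=z_1$: the pendants realize $\pm 1$ and the center realizes $z_1$, and the diameter is $2$. When $n=2$, I use a double broom on internal vertices $u,v$: choose a sign $\sigma$ for $uv$, then attach $a_u\ge 1$ positive and $b_u\ge 1$ negative pendants to $u$ with $\sigma+a_u-b_u=z_1$ (always solvable since one may take $a_u=\max(1,z_1-\sigma+1)$, $b_u=\max(1,\sigma-z_1+1)$), and analogously at $v$. When $n>2$, I use a two-center tree with central vertex $c$ joined by positive edges to middle vertices $m_1,\dots,m_n$: at each $m_i$ I attach $a_i\ge 1$ positive and $b_i\ge 1$ negative pendants with $a_i-b_i=z_i-1$, so that $sdeg(m_i)=z_i$; then I attach $|z_1-n|$ direct pendants to $c$ of the appropriate sign so that $sdeg(c)=z_1\in D$. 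In each case the resulting tree has exactly the diameter claimed.

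The main obstacle is to simultaneously realize both pendant signed degrees $+1$ and $-1$ alongside the possibly signed targets $z_i$ on the internal vertices, without inflating the diameter. Insisting on $a_i\ge 1$ and $b_i\ge 1$ at every middle vertex (and likewise at $u,v$ when $n=2$) disposes of this uniformly, since each such vertex then contributes pendants of both signs regardless of whether $z_i$ is positive, zero, or negative. The only remaining subtlety is $sdeg(c)$ in the case $n>2$, which starts at $n$ (all edges $cm_i$ being positive) and may not lie in $D$; this is corrected by appending direct pendants of a single sign to $c$, which harmlessly adds leaves of the already-present signed degrees $\pm 1$.
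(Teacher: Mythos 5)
Your proposal is correct, but it proceeds quite differently from the paper. You prove the lower bounds directly, by classifying trees of diameter at most $3$ (a single edge, a star, a double star) and observing that their signed degree sets have at most $1$, $3$, and $4$ elements respectively, which is incompatible with $|D| = n+2$; and you prove the upper bounds by explicit constructions with mixed edge signs (a star with $a$ positive and $b$ negative pendant edges for $n=1$, a double star for $n=2$, and a depth-two spider with a corrected center degree for $n>2$), where insisting on at least one pendant of each sign at every internal vertex handles $z_i$ positive, zero, or negative uniformly; your arithmetic for choosing $a_i, b_i \geq 1$ and for fixing $sdeg(c)=z_1$ checks out, and none of the added pendants inflates the diameter. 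The paper instead proves the theorem by a transfer argument: it shows $diam(\{1,-1,z_1,\dots,z_n\}) = diam(\{1,\Delta+1,\dots,\Delta+n\})$ by converting an optimal tree for either set into a tree for the other with the same diameter (attaching new pendant vertices, with suitable signs, only to non-pendant vertices), and then invokes Theorem \ref{diamPositive}; both the upper and lower bounds come out of this single equivalence. Your route is more self-contained and constructive, exhibiting optimal signed trees explicitly and making the counting behind the lower bounds transparent; the paper's route avoids redoing case-by-case constructions, reuses the all-positive case wholesale, and yields the structural identity $diam(D)=diam(D')$, which is of independent interest.
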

\begin{proof}
Let $(T, s)$ be an optimal signed tree that satisfies $D$. Further, set $\Delta = \Delta(T)$, the maximum degree of $T$, and let $D' = \{1, \Delta + 1, ..., \Delta + n\}$. We will show that $diam(D) = diam(D')$, which by Theorem \ref{diamPositive} will give us the result.

\noindent To prove that $diam(D') \leq diam(D)$, we will show that we can create a tree $(T', s')$ that satisfies $D'$ from $(T, s)$ such that $diam(T) = diam(T')$. This would suffice because of the fact that $diam(D') \leq diam(T') = diam(T) = diam(D)$. To construct $(T', s')$, we start by setting $F$ as a tree such that $F\cong T$. Let $V = \{v_1, ..., v_n\}$ be a collection of non-pendant vertices in $F$. Define $F'$ as that tree that outcomes from attaching $\Delta + i - deg_F(v_i)$ new vertices to each vertex $v_i$. Finally, let $T'$ be that tree that outcomes from attaching to every non-pendant vertex $w \notin V$ in $F'$ the quantity of $\Delta+1-deg_{F'}(w)$ new vertices. Notice that every nonpendant vertex of $T$ has been changed when compared to $T'$. Moreover, for every valid $i$, note that $deg_{T'}(v_i) = \Delta + i$, and for every non-pendant vertex $w \notin V$ we have $deg_{T'} (w) = \Delta + 1$. Thus, if $s'$ assigns $+$ to every edge in $T$, then $(T', s')$ satisfies $D'$. Since we added vertices only to non-pendant vertices, it must be that $diam(T) = diam(T').$ For an illustration of a signed graph $(T, s)$ and $(T', s')$, see Figure 5.

\noindent To prove that $diam(D) \leq diam(D')$, we start by letting $(H, s)$ be an optimal signed tree that satisfies $D'$. We will show that we can create a signed tree $(H', s')$ that satisfies $D$ such that $diam(H') = diam(H)$. Let $p_i = \Delta + i - z_i$, and notice that $p_i > 0$. Set $V = \{v_1, ..., v_n \}$ as a set of vertices in $H$ such that $sdeg(v_i) = \Delta + i$, and let $G$ be that tree that outcomes from attaching $p_i$ new vertices to $v_i$. Finally, for each non-pendant vertex $w$ in $H$ such that $w\not \in V$, let $H'$ be that tree that outcomes from attaching $|sdeg_H(w)| + 1$ vertices [to $w$]. 
It remains to define $s'$. Set $s'$ as the extension of $s$ where edges in $E(G) \setminus E(H)$ get assigned a $-$, and edges in $E(H') \setminus E(G)$ get assigned a $+$ if $sdeg_H(w) < 0$ or a $-$ if $sdeg_H(w) > 0$. Notice then that $sdeg_{H'}(v_i) = \Delta + i - (\Delta + i - z_i) = z_i$, and for nonpendant vertices $w \not = v_i$ we have that $sdeg_{H'}(w) = 1$ because of our choice of the sign that $s$ assigns to the new edges incident to $w$ in $H'$. Every other vertex will be pendant vertices because they were either pendant vertices in $H$, or they were vertices added in the construction, and since we did not add vertices to new ones, they have remained pendant. Thus, $(H', s')$ satisfies $D$. Since we only attached vertices to non-pendant vertices, $diam(H') = diam(H)$. This concludes the proof as $diam (D) \leq diam(H') = diam(H) =  diam(D')$.
\end{proof}

\section{Minimum Order for a Tree}

We now shift our attention to optimizing order, where the order of a graph $G$ is denoted by $\sigma(G)$. Similar to the previous section, we want to optimize order while satisfying a given degree set $D$.

\begin{definition}
Let $D$ be a valid set. Define its order, denoted by $\sigma(D)$, as the smallest order that the underlying tree of a signed tree must have so that it satisfies $D$, or equivalently:
$$
\sigma(D) = \min\{\sigma(T) \text{ : the signed tree } (T, s) \text{  realizes }D \}.
$$
Further, a signed tree $(T, s)$ is now optimal when $\sigma(T) = \sigma(D)$ (so it no longer refers to diameter).
\end{definition}

Unlike our diameter section, we have not found the minimum order for every possible valid set $D$. We have solved only two cases, the first case encompassing only positive integers, and the second case of positive integers and zero. The former case, as with diameter, is the simplest one. We remind the reader that throughout the paper we assume $x_i > 1$.

\begin{theorem} \label{order1}
If $D = \{1, x_1,..,. x_n \}$, then $\sigma(D) = 2 - n + \sum x_i$.
\end{theorem}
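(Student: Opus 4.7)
The plan is to prove the identity by matching upper and lower bounds; the upper bound is an explicit construction in the spirit of Theorem \ref{diamPositive}, and the lower bound is a short degree-sum computation that exploits the fact that every element of $D$ is non-negative.

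For the upper bound I may assume without loss of generality that $x_1 < x_2 < \cdots < x_n$. Starting from the star $ST(x_n)$ with center $c$, I pick $n-1$ distinct pendant neighbors $p_1, \ldots, p_{n-1}$ of $c$ and attach $x_i - 1$ fresh pendants to each $p_i$. Since $x_1 > 1$ and the $x_i$ are distinct, $x_n \geq n+1$, so $ST(x_n)$ has more than enough pendants for this operation. Assigning $+$ to every edge, the signed degree of each vertex equals its degree: $c$ has signed degree $x_n$, each $p_i$ has signed degree $x_i$, and every remaining vertex is a leaf with signed degree $1$. The order of the resulting tree is
$$
(x_n + 1) + \sum_{i=1}^{n-1}(x_i - 1) \;=\; 2 - n + \sum_{i=1}^{n} x_i,
$$
giving $\sigma(D) \leq 2 - n + \sum x_i$.

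For the lower bound, let $(T, s)$ be any signed tree realizing $D$. Write $a \geq 1$ for the number of vertices of signed degree $1$ and $b_i \geq 1$ for the number of vertices of signed degree $x_i$, so $\sigma(T) = a + \sum_{i=1}^n b_i$. Because every element of $D$ is non-negative, $deg(v) \geq sdeg(v)$ at every vertex $v$. Summing this inequality over $V(T)$ and invoking $\sum_v deg(v) = 2(\sigma(T) - 1)$ yields
$$
2\bigl(\sigma(T) - 1\bigr) \;\geq\; a + \sum_{i=1}^{n} b_i\, x_i.
$$
Substituting $\sigma(T) = a + \sum b_i$ and rearranging gives $\sigma(T) \geq 2 + \sum_{i=1}^n b_i(x_i - 1)$. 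Since each $b_i \geq 1$ and $x_i \geq 2$, the right-hand side is at least $2 + \sum_{i=1}^{n}(x_i - 1) = 2 - n + \sum x_i$, matching the upper bound.

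There is no serious obstacle: the construction is routine once one notes that $ST(x_n)$ has enough pendants to absorb the $n-1$ enhancements, and the lower bound is just the handshake lemma together with $deg(v) \geq sdeg(v)$ (valid precisely because $D$ has no negative entries). The only conceptual remark worth making in the write-up is that the degree-sum inequality is tight exactly when every edge of $T$ is positive, which is why the sign-free construction is in fact optimal.
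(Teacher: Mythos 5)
Your proof is correct, but it takes a genuinely different route from the paper. The paper argues by induction on $n$: it takes an optimal tree realizing $D$, observes that optimality together with the positivity of all signed degrees forces every edge to be positive, strips the pendant vertices off the neighbor $v$ of a limiting pendant vertex (so that $v$, of degree $x_a$, becomes a leaf and the remaining tree realizes $D\setminus\{x_a\}$), and then invokes the induction hypothesis. You instead prove matching bounds directly: an explicit star-based construction for the upper bound (essentially the diameter-$4$ tree of Theorem \ref{diamPositive}, whose order you compute to be $2-n+\sum x_i$), and a lower bound via the handshake lemma, summing $deg(v)\ge sdeg(v)$ over all vertices and using $\sum_v deg(v)=2(\sigma(T)-1)$ to get $\sigma(T)\ge 2+\sum b_i(x_i-1)\ge 2-n+\sum x_i$. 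Your route avoids the induction and the structural claims about optimal trees (e.g.\ that the pruned tree is again optimal, a point the paper's inductive step leans on), at the cost of not revealing the structure of optimal trees; it also yields the sharper intermediate inequality $\sigma(T)\ge 2+\sum b_i(x_i-1)$, which quantifies the penalty for repeated signed degrees. One small correction: the inequality $deg(v)\ge sdeg(v)$ holds for every vertex of every signed graph, not ``precisely because $D$ has no negative entries''; the nonnegativity of $D$ is what makes the resulting bound strong enough (and, as you note, equality in the summed inequality holds exactly when all edges are positive), but it is not needed for the inequality's validity.
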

\begin{proof}
We will prove the theorem by induction. For $n = 1$, notice that $(ST(x_1), s)$, where s assigns positives to every edge, is a tree of least order that realizes $D$, and that $\sigma(ST(x_1)) = x_1 + 1$ which satisfies the formula. Assume that the result holds for $n - 1$. Let $(T, s)$ be a tree that realizes $D$ with optimal order. Further, let $p$ be a limiting pendant vertex, and set $v$ as the vertex adjacent to $p$. Since every signed degree is positive and the tree is optimal, then every edge must be positive. It follows that $deg(v) = x_a$ for some $a$. If we let $T'$ be that tree obtained by removing every pendant vertex adjacent to $v$, then $T'$ stays connected and $v$ becomes a pendant vertex in $T'$ since $p$ was a limiting pendant. Further, it cannot be that $(T', s|_{E(T')})$ realizes $D$ since $\sigma(T') < \sigma(T)$, so it must be that $(T', s)$ realizes $D- \{x_a \}$. By the induction hypothesis, $\sigma(T') = 2 - (n-1) + \sum x_i - x_a$. This, combined with the fact that $\sigma(T') = \sigma(T) - (x_a - 1)$, completes the inductive step. 
\end{proof}

\noindent Even though $\sigma(D)$ for positives and zero has a very similar expression to that of just positives (see Theorem \ref{last}), the proof of Theorem \ref{last} is much harder. We first need to establish two results about the structure of every optimal tree that satisfies a set with positive values and 0.

\noindent The proof technique we will use for these two results have the following structure: to prove that every optimal signed tree of a set $D$ satisfies a statement $P$, we will demonstrate that if there exists an optimal signed tree $(T, s)$ that fails $P$, then there exists a signed tree $(T', s')$ that satisfies $D$ and has the property that $\sigma(T') < \sigma (T)$. This will be enough as it contradicts the fact $(T, s)$ is optimal.
Further, the underlying tree $T'$ will be based on $T$, having changes made through the "transfer" of vertices. We formalize this in the following definition.

\begin{definition}
Let $T$ be a tree with the distinct vertices $u, v$, and $w$. If $uv\in E(T)$ and the unique path between $u$ and $w$ includes $v$, then the graph $T'$ that outcomes from transferring $u$ to $w$ is the graph with the following properties.
\begin{itemize}
    \item $V(T') = V(T)$, and
    
    \item $E(T') = E(T) \setminus \{uv \} \cup \{uw\}$.
\end{itemize}
\end{definition}

\noindent Notice that the condition of having $v$ be in the path between $u$ and $w$ guarantees that $T'$ is a tree as well. 
\textbf{Since multiple transfers usually occur in a single proof, we will keep denoting the tree after the transfer also as $T$ for the sake of simplicity.}
The proofs and cases are short enough that hopefully the abuse of notation will not cause confusion. In this paper, we use $N(v)$ to refer to the neighborhood of a vertex $v$.

\begin{lemma} \label{lastresult0}
If $(T, s)$ is an optimal signed tree that realizes $D = \{1, 0, x_1,..., x_n \}$, and $ab$ is an edge in $T$ such that $s(ab) = -$, then $sdeg(a) = sdeg(b) = 0$. 
\end{lemma}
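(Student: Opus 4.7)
My approach is proof by contradiction. I assume $(T,s)$ is optimal and realizes $D$, and suppose some negative edge $ab$ has $sdeg(a) \neq 0$ (the case $sdeg(b) \neq 0$ is symmetric). Since every element of $D$ is nonnegative, this forces $sdeg(a) \geq 1$. The goal is to exhibit a signed tree $(T',s')$ realizing $D$ but with $\sigma(T') < \sigma(T)$, which contradicts optimality.

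I would first assemble standing facts. Because $-1 \notin D$, every pendant of $T$ has signed degree $+1$ and its incident edge is positive. In particular, neither $a$ nor $b$ is a pendant, since the pendant endpoint of $ab$ would have signed degree $-1 \notin D$. From $sdeg(a) \geq 1$ together with the negative edge $ab$ one gets that $a$ has at least two positive neighbors, and from $sdeg(b) \geq 0$ together with the negative edge $ab$ one gets that $b$ has at least one positive neighbor.

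The clean case is when $a$ has a positive pendant neighbor $u$ and $sdeg(a) = 1$. Simply delete $u$ to obtain $T'$. Then $\sigma(T') = \sigma(T) - 1$; the new signed degree of $a$ is $0$; every other signed degree is unchanged. The resulting signed-degree set is still $D$ because $0 \in D$ is realized by the new $sdeg(a)$, the value $1 \in D$ is realized by any pendant of $T$ other than $u$ (which exists since $T$ has at least three vertices, hence at least two pendants), and each $x_i \in D$ is realized by its original vertex. This contradicts optimality.

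The main obstacle is covering the remaining subcases, which I would reduce to the clean case through preparatory transfers and multi-vertex deletions. When $sdeg(a) = x_i \geq 2$, a single pendant deletion may land $sdeg(a)$ at $x_i - 1 \notin D$; I would instead simultaneously delete enough positive pendant neighbors of $a$ so that $sdeg(a)$ lands on an element of $D$ (worst case $0$ or $1$), and verify no value of $D$ is erased by tracking the multiplicities of the retained signed degrees. When $a$ has no positive pendant neighbor at all, I would first transfer a limiting pendant $p$ of $T$ onto $a$ via a positive edge: this preserves $sdeg(p) = 1$, increases $sdeg(a)$ by $1$, and decreases by $1$ the signed degree of $p$'s former neighbor. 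Choosing $p$ so that this last decrement still lands in $D$ (by case analysis on whether that neighbor has signed degree $0$, $1$, or some $x_j$) creates a positive pendant neighbor of $a$, reducing to the earlier case. The workhorse throughout is that $T$ has at least two pendants of signed degree $+1$, so single-pendant deletions never destroy the value $+1 \in D$.
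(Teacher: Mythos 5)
Your clean case ($sdeg(a)=1$ with a positive pendant neighbour $u$) is correct, but the remaining cases are the real content of the lemma, and the moves you sketch for them can fail. The central problem is the situation where $sdeg(a)=x_i\geq 2$ and $a$ is the \emph{unique} vertex of signed degree $x_i$: deleting positive pendant neighbours of $a$ until $sdeg(a)$ lands in $D$ necessarily erases the value $x_i$ from the realized set, and no amount of ``tracking multiplicities'' can rescue this, since the multiplicity of $x_i$ at the start may be $1$. (There is also no guarantee that $a$ has any positive pendant neighbours to delete.) What is needed is a mechanism that re-creates the value $k=sdeg(a)$ at some other vertex before lowering $a$ to $0$, and that is exactly what the paper's argument supplies: it first turns $k$ of the positive neighbours $w_1,\dots,w_k$ of $a$ into pendants by transferring all of their other neighbours to a distant pendant (a move that only relocates signed degrees, so the set realized is unchanged), then transfers $w_1,\dots,w_{k-1}$ onto a single pendant $p'$, so that $sdeg(p')=k$, and only then deletes $w_k$, leaving $sdeg(a)=0$ and the order smaller by one. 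Your toolkit (delete pendant neighbours of $a$; import a limiting pendant onto $a$) contains no analogue of this relocation step, so the uniquely-realized case is a genuine gap.

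Your fallback for ``$a$ has no positive pendant neighbour'' is also gapped. Transferring a limiting pendant $p$ onto $a$ raises $sdeg(a)$ by $1$, so once you delete a pendant neighbour of $a$ you are back to the original value of $sdeg(a)$; the net effect of the combined move is ``delete one pendant and decrement the signed degree of its former neighbour $q$.'' You must therefore guarantee a donor $q$ for which this decrement is harmless, and you only assert that you will ``choose $p$'' so that $sdeg(q)-1\in D$. Such a choice need not exist: every limiting pendant's neighbour may have signed degree $0$ (the decrement then produces $-1\notin D$), or may be the unique vertex of signed degree some $x_j$ (the decrement then erases $x_j$, and $x_j-1$ need not lie in $D$). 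The paper sidesteps this entirely by never importing pendants toward $a$: it strips the existing positive neighbours of $a$ down to pendants by transfers that push their subtrees away, which never disturbs the donor side. Without that idea, your outline does not yield the required smaller tree realizing $D$ in the problematic cases.
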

\begin{proof}
Assume for a contradiction that $a$ has signed degree $k > 0$. It follows that $a$ there exist $k + 1$ vertices $w_1, ..., w_{k+1}$ adjacent to $a$ such that $s(aw_i) = +$. Transfer the vertices in the set $N(w_1)- \{a \}$ to a pendant vertex $p$ that satisfies the conditions for transfers (i.e. the transfer will not produce a cycle), and modify $s$ so it assign + to these new edges. Notice that the signed degree set of $(T, s)$ will not have changed. If we repeat this process again with $w_2$, $w_3, ...,w_{k-1}$, and $w_{k}$ such that when dealing with $w_j$ we have $p \not = w_i$ with $i < j$, then $T$ becomes a tree where $w_1, ..., w_k$ are pendant vertices. Further, $(T, s)$ will still satisfy the degree set $D$. Finally, if we transfer $w_1, ..., w_{k-1}$ of the vertices to a pendant vertex $p'\not = w_k$ and then delete $w_k$, we notice that $sdeg(p') = k$ and that $sdeg(a) = 0$. Thus, $T$ still satisfies $D$ but its order has decreased, contradicting the assumption that $T$ was optimal at the beginning of the proof. We conclude that $sdeg(a) = 0$. The same argument can be applied to $b$.
\end{proof}

From now on, whenever a transfer happens in a signed tree, we also modify $s$ so it maintains the sign in the new edge unless otherwise stated.

\begin{lemma} \label{lastresult1}
If $(T, s)$ is an optimal signed tree that realizes $D = \{1, 0, x_1,..., x_n \}$, then there is only one edge $e$ in $T$ such that $s(e) = -$.
\end{lemma}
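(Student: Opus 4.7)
The plan is a proof by contradiction: assume $(T, s)$ is optimal and realizes $D$ but has at least two negative edges, and exhibit a signed tree realizing $D$ of strictly smaller order.

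The first observation is that by Lemma \ref{lastresult0}, every endpoint of a negative edge has signed degree $0$. Consider the subforest $T_-$ of $T$ consisting of the negative edges; since $|E(T_-)| \geq 2$, $T_-$ has a leaf. I pick a leaf $u$ of $T_-$ with unique incident negative edge $uv$. Because $sdeg(u) = 0$ and $u$ has exactly one negative edge, it has exactly one positive edge as well, to some vertex $p$, and so $\deg_T(u) = 2$.

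The main reduction applies when $v$ is also a leaf of $T_-$: by symmetry, $v$ has degree $2$ with a unique positive neighbor $q$, and I would delete the two vertices $u$ and $v$ (along with the three incident edges $pu, uv, vq$) and add the single positive edge $pq$. The new graph is a tree with two fewer vertices; every signed degree is preserved (each of $p$ and $q$ swaps a positive edge for a positive edge); and the signed degree set is still $D$ because another negative edge (guaranteed by $|E(T_-)| \geq 2$) still contributes a signed-degree-$0$ vertex, while every other value of $D$ is realized by vertices untouched by the modification. This contradicts the optimality of $(T, s)$.

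The remaining case is that $v$ is not a leaf of $T_-$, so every component of $T_-$ containing $uv$ has at least two edges. Here $v$ has at least two negative edges incident to it and hence at least two positive neighbors; the intended modification is to transfer one of them, say $q$, onto $p$ (replacing the edge $vq$ with $pq$ while keeping the $+$ sign) and then delete $u$ together with its two incident edges. Locally, the signed degrees are preserved: $p$ loses a positive edge to $u$ and gains one to $q$; $v$ loses a negative edge to $u$ and a positive edge to $q$; $q$'s edge is merely re-anchored. The main obstacle is ensuring the resulting graph is still a tree: after these operations the two components of $T - u$ need to be reconnected, which is automatic only when $q$ is chosen so that $q$'s subtree already bridges back to the $p$-side. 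I expect that a careful choice of $q$ — exploiting the fact that $v$ has multiple positive neighbors in this case — together if necessary with a secondary compensating transfer along a second negative edge incident to $v$, suffices to complete the reduction in every sub-configuration of $T_-$ around $uv$. Verifying this via case analysis is the technically intricate step of the proof, but in every configuration the order drops by at least one, contradicting optimality.
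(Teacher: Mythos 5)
Your Case 1 (the negative edge $uv$ isolated in the negative subforest $T_-$) is correct and in fact cleaner than what the paper does in that configuration: deleting the two degree-$2$ vertices $u,v$ and adding the positive edge $pq$ is a valid tree operation, preserves the signed degrees of $p$ and $q$, and $0$ survives at the endpoints of the other negative edge by Lemma \ref{lastresult0}. The problem is Case 2, which is the essential case (e.g.\ two negative edges sharing the vertex $v$), and there your argument has a genuine gap. The operation you describe --- delete $u$ together with its two edges $pu, uv$, delete the edge $vq$, and add the single edge $pq$ --- removes one vertex and three edges while adding only one edge, so the result has $|V(T)|-1$ vertices but $|E(T)|-2$ edges: it is a two-component forest, with the component containing $v$ stranded. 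Moreover, the rescue you suggest (``choose $q$ so that $q$'s subtree already bridges back to the $p$-side'') is impossible: in the tree $T$, deleting $u$ separates $p$ from $v$, and every neighbor of $v$ other than $u$ lies in the component not containing $p$, so no choice of $q$ makes the reconnection automatic. The ``secondary compensating transfer'' and ``case analysis'' you defer to is exactly the hard content of the lemma, and it is not supplied; as stated, the reduction in Case 2 does not go through.

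It may help to see why the difficulty disappears in the paper's proof: the paper does not insist on preserving every vertex's signed degree. Taking the two negative edges $u_1v_1$ and $u_2v_2$ with both edges on the $u_1$--$u_2$ path, it transfers away all neighbors of $u_1$ except $v_1$ and one positive neighbor $w$ (sending them to $u_2$, where they arrive in balanced $+/-$ pairs), then strips $w$ down to a pendant, and finally deletes the pendant path $w, u_1$. This only removes vertices of a pendant path, so no reconnection is ever needed; the signed degree of $v_1$ rises from $0$ to $1$, which is harmless because $1\in D$, and $0$ remains realized at $u_2$. A correct completion of your Case 2 will need some version of this idea --- either delete two vertices (as in your Case 1) or allow a controlled change of one endpoint's signed degree to a value already in $D$ --- rather than a single-vertex deletion that keeps all signed degrees fixed, which the edge count above rules out.
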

\begin{proof}
For a contradiction, assume that there exists an optimal tree $T$ with at least 2 negative edges: $u_1v_1$ and $u_2v_2$, where the vertices are labeled such that the unique path from $u_1$ to $u_2$ includes both negative edges. By Lemma \ref{lastresult0}, $u_1$ has signed degree 0, so there must exist a vertex $w$ adjacent to $u_1$ such that $u_1w$ is positive. Transfer every vertex in $N(u_1) - \{v_1, w \}$ to $u_2$. This change will not affect the set that $(T, s)$ satisfies because the signed degrees of $u_1$ and $u_2$ will not change. Transfer the vertices in $N(w) - \{u_1 \}$ to a pendant vertex $p$ making sure $p$ satisfies the conditions for transfer. Again, notice that $T$ still satisfies $D$. However, setting $T' = T- \{u_1, w \}$, this implies that $(T', s|_{V(T')})$ satisfies $D$. This is the case because $v_2$ and $u_2$ have signed degree 0, and by deleting $u_1$ we only change the signed degree of $v_1$ to 1. This contradicts the assumption that $T$ was of optimal order. Thus, there cannot be two negative edges in $T$. 
\end{proof}

The following corollary follows immediately from Lemma \ref{lastresult0} and Lemma \ref{lastresult1}

\begin{corollary} \label{lastresult2}
If $uv$ is the unique negative edge in an optimal signed tree $(T, s)$ that satisfies $D$, then $deg(u) = deg(v) = 2$.
\end{corollary}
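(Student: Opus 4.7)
The plan is to combine Lemmas \ref{lastresult0} and \ref{lastresult1} directly. By Lemma \ref{lastresult0}, the endpoints of any negative edge in an optimal signed tree have signed degree $0$, so in particular $sdeg(u) = sdeg(v) = 0$. The definition of signed degree then tells us that at each of $u$ and $v$, the number of positive incident edges equals the number of negative incident edges.

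Next I would use Lemma \ref{lastresult1}, which guarantees that $uv$ is the only negative edge in the entire tree. In particular, it is the only negative edge incident to $u$, so $u$ has exactly one negative incident edge. Combined with $sdeg(u)=0$, this forces $u$ to have exactly one positive incident edge as well, giving $\deg(u) = 2$. The identical argument applied to $v$ yields $\deg(v) = 2$, which completes the proof.

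The main obstacle here is essentially non-existent, since the two previous results do all the heavy lifting; the corollary is really just bookkeeping. The only subtlety to be careful about is making the counting argument explicit: one must note that uniqueness of the negative edge (Lemma \ref{lastresult1}) is what turns the equality ``positives minus negatives equals zero'' at each endpoint into ``exactly one positive and exactly one negative,'' rather than some larger balanced count. No transfers, inductions, or case analyses are needed.
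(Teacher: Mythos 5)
Your argument is correct and is exactly the deduction the paper intends, since the paper derives the corollary directly from Lemmas \ref{lastresult0} and \ref{lastresult1} without further machinery. The counting step you make explicit (unique negative edge plus signed degree $0$ forces one positive and one negative edge at each endpoint) is precisely the ``immediate'' step the paper leaves to the reader.
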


We will denote the negative edge, and surrounding vertices, by $G_0$, as illustrated in Figure 6. This allows for a very simple proof of our last result. As with diameter, we note that the result of a previous case facilitates the proof of another case.

\begin{figure}[h]
\centering
\includegraphics[scale=0.5]{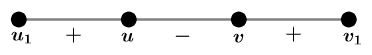} 
\caption{The graph $G_0$.}
\label{g0}
\end{figure}

\begin{theorem} \label{last}
If $D = \{1, 0, x_1,..., x_n \}$, then $\sigma (D) = 4-n + \sum x_i$.
\end{theorem}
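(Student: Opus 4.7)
The plan is to prove matching upper and lower bounds $\sigma(D) \le 4 - n + \sum x_i$ and $\sigma(D) \ge 4 - n + \sum x_i$, both by reducing to the positive-only case handled in Theorem \ref{order1}. The structural results just proved say that the $G_0$ gadget of Figure \ref{g0} is forced on every optimal tree, so morally $G_0$ is the cheapest way to install a signed degree of $0$ without disturbing the other values; inserting $G_0$ will give the upper bound, and contracting it away will give the lower bound.

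For the upper bound, I would begin with an optimal signed tree $(T^*, s^*)$ realizing $D' = \{1, x_1, \ldots, x_n\}$. By Theorem \ref{order1} this tree has order $2 - n + \sum x_i$, and (as observed in the course of that proof) every edge of $T^*$ is positive. Pick any pendant vertex $p$ of $T^*$ with neighbor $q$, and subdivide the edge $pq$ twice to insert a new path $p - u - v - q$, extending the signing by $s(pu) = s(vq) = +$ and $s(uv) = -$. A direct check shows that the signed degree of every old vertex is unchanged and that $sdeg(u) = sdeg(v) = 0$, so the new signed tree realizes $D$ and has order $4 - n + \sum x_i$.

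For the lower bound, let $(T, s)$ be an optimal signed tree realizing $D$. By Lemma \ref{lastresult1} there is a unique negative edge $uv$, and by Corollary \ref{lastresult2} both $u$ and $v$ have degree $2$. Writing $u_1, v_1$ for the other neighbors of $u$ and $v$, the edges $u u_1$ and $v v_1$ must be positive because $sdeg(u) = sdeg(v) = 0$; also $u_1 \neq v_1$ since $T$ is a tree. I would then form $T'$ by deleting $\{u, v\}$ and adding a single new positive edge $u_1 v_1$. A quick edge count confirms $T'$ is again a tree with $|V(T')| = |V(T)| - 2$, and the swap at $u_1$ and $v_1$ trades one positive incident edge for another, so every surviving vertex inherits its signed degree from $T$.

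The step I expect to be the main obstacle is arguing that the signed degree set of $T'$ is exactly $\{1, x_1, \ldots, x_n\}$ rather than $D$ itself. It is clear that $1$ and every $x_i$ still appear, since those values were realized in $T$ by vertices distinct from $u$ and $v$ and those vertices keep their signed degrees. The subtlety is ruling out that some \emph{other} vertex of $T$ also carried signed degree $0$: if one did, then $T'$ would realize $D$ while having strictly smaller order than $T$, contradicting the optimality of $T$. Hence $T'$ realizes $\{1, x_1, \ldots, x_n\}$, and applying Theorem \ref{order1} gives $|V(T')| \geq 2 - n + \sum x_i$, which rearranges to $|V(T)| \geq 4 - n + \sum x_i$ and closes the argument.
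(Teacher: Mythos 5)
Your proposal is correct, and it leans on the same ingredients as the paper (the structural Lemmas \ref{lastresult0} and \ref{lastresult1}, Corollary \ref{lastresult2}, and a reduction to Theorem \ref{order1}), but the reduction itself is genuinely different. The paper runs a three-way case analysis on the $G_0$ gadget: when one of its outer neighbors is pendant it deletes two vertices and invokes optimality to apply Theorem \ref{order1} once, and in the remaining case it deletes the negative edge $uv$, decomposes $T$ into two subtrees realizing sets $U$ and $V$ with $U \cup V = D \setminus \{0\}$, argues that $U \setminus \{1\}$ and $V \setminus \{1\}$ are disjoint and that each piece is optimal, and applies Theorem \ref{order1} to both pieces. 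Your single surgery --- delete $u$ and $v$ and rejoin $u_1 v_1$ by a positive edge --- handles all cases uniformly with one application of Theorem \ref{order1}, at the price of the one subtlety you correctly isolate and dispatch (ruling out a second vertex of signed degree $0$ via the optimality of $T$). You also make the upper bound explicit by doubly subdividing a pendant edge of an order-optimal all-positive tree for $\{1, x_1, \ldots, x_n\}$; the paper never spells this construction out, although something like it is implicitly needed (for instance to justify, in its Case 1, that the truncated tree is optimal for $D \setminus \{0\}$), so on this point your write-up is the more complete one. A small simplification: for the subdivision step you only need the chosen pendant edge to be positive, which is automatic because a pendant vertex has signed degree $\pm 1$ and $-1 \notin \{1, x_1, \ldots, x_n\}$, so you need not appeal to the stronger claim that every edge of the optimal tree is positive.
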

\begin{proof}
Let $T$ be an optimal tree that satisfies $D$. As noted, the graph $G_0$ is an induced subgraph of $T$, and $s$ assigns the corresponding signs as indicated in Figure \ref{g0}. There are three cases:

\textbf{Case 1: $deg(u_1) = 1$.} Consider the graph $T' = T - \{u_1, u\}$. $T'$ realizes $D - \{0 \}$. Further, $T'$ is an optimal tree of $D - \{0 \}$ because if it was not, then $T$ would not be an optimal tree for $D$. Thus, $\sigma(T') = \sigma(D - \{0 \}) = 2-n+ \sum x_i$. Since $\sigma(T) = \sigma(T') + 2$, the result holds.

\textbf{Case 2: $deg(u_2) = 1$}. We can apply the same argument as in Case 1.

\textbf{Case 3:} $deg(u_1) \not = 1$ and $deg(u_2) \not = 1$. Let $T_1$ and $T_2$ be the two components of $T- uv$, and let $U$ and $V$ be the set of signed degrees that $(T_1, s|_{E(T_1)})$ and $(T_2, s|_{E(T_2)})$ satisfy respectively. Notice that $U\cup V = D - \{0 \}$, and that $U- \{1 \}$ and $V-\{1 \}$ are mutually exclusive since $T$ is optimal. Similarly, $(T_1, s|_{E(T_1)})$ and $(T_2, s|_{E(T_2)})$ must also be optimal for $U$ and $V$. Thus, by Theorem \ref{order1}, 
\begin{align*}
\sigma(T) & = \sigma(T_1) + \sigma(T_2) \\
          & = 2-(|U|-1)+\sum_{x_i\in U}x_i + 2 - (|V|-1) + \sum_{x_j\in V}x_j \\
          & = 6 - (|U| + |V|) + \sum x_i \\
          & = 6 - (n+2) + \sum x_i \\
          & = 4 - n + \sum x_i.
\end{align*}
\end{proof}

\section{Conclusion}
We have studied two ways to optimize a signed tree, by diameter and by order. In this paper, we found the optimal signed trees by diameter for any given valid degree set. In addition, we found the optimal signed trees in regards to order for when the degree set contains positives and when the degree set contains 0 and positives.

Future research directions include optimizing order for degree sets containing negative values. Observe that the trees constructed in this paper to prove results suggest that optimizing diameter requires a large order, and similarly optimizing order requires a large diameter. We conjecture that if $|D| > 2$ and $(T, s)$ satisfies $D$, then it is not possible to have both $diam(T) = diam(D)$ and $\sigma(T) = \sigma(D)$.

\end{document}